\DeclarePairedDelimiterX\setc[2]{\{}{\}}{\,#1 \;\delimsize\vert\; #2\,}
\newcommand{\R}{\mathbb{R}}
\newcommand{\B}{\mathbb{B}}
\renewcommand{\S}{\mathbb{S}}
\newcommand{\Sn}{\mathbb{S}^n}
\newcommand{\blk}{\overline{\lambda}_k}
\renewcommand{\l}{\lambda}
\renewcommand{\O}{\Omega}
\newcommand{\h}{\tilde{h}}
\newcommand{\FF}{\tilde{F}}
\newcommand{\m}{d\tilde{\mu}}
\newcommand{\RN}[1]{%
  \textup{\uppercase\expandafter{\romannumeral#1}}%
}
\newcommand{\li}{\overline{\lambda}_{2k+1}(M,g)} 
\newcommand{\la}{\overline{\lambda}_{2k}(M,g)} 
\newtheorem{defi}{Definition}[section] 
\newtheorem{thm}[defi]{Theorem}
\newtheorem{rem}[defi]{Remark}
\newtheorem{prop}[defi]{Proposition}
\newtheorem{lemma}[defi]{Lemma}
\title{Payne-Pólya-Weinberger inequalities on closed Riemannian manifolds}
\author{{Mehdi} {Eddaoudi}}\email{mehdi.eddaoudi.1@ulaval.ca}\address{{D\'epartement de math\'ematiques et de statistique}, {Pavillon Alexandre-Vachon}, {Universit\'e Laval}, {Qu\'ebec QC}, {G1V 0A6}, {Canada}}
\begin{document}

\begin{abstract}
Payne-Pólya-Weinberger inequalities are known to be exclusive to bounded Euclidean domains with Dirichlet boundary condition. In this paper, we discuss the corresponding inequalities on Riemannian manifolds of dimension $n \geq3$, and we prove explicit bounds in terms of geometric quantities such as scalar curvature, Yamabe constant,  isoperimetric constant and  conformal volume. 
\end{abstract}

\maketitle

\section{Introduction}

\subsection{Universal inequalities for Dirichlet eigenvalues}
Let $\Omega \subset \R^n$ be a regular bounded domain, and consider the classical Dirichlet eigenvalue problem
\begin{equation}
    \begin{cases}
\Delta u = -\lambda^D u & \text{in } \Omega, \\
u = 0 & \text{on } \partial \Omega,
\end{cases} \label{eq:Dirichlet problem}
\end{equation}
It is well known that the spectrum of \eqref{eq:Dirichlet problem}  is real and discrete, and consists of a discrete sequence of eigenvalues
\[
\l_1^D(\O) < \l_2^D(\Omega) \leq \cdots \nearrow +\infty,
\]
repeated according to its multiplicity.

 Traditionally, an universal inequality establishes a relationship between Dirichlet eigenvalues that is independent of the specific geometric properties of the domain. This topic first appeared in the 1950s with a famous result by Payne, Pólya, and Weinberger (PPW for short)~\cite{PPW}, who proved an upper bound for the ratio  $\lambda^D_{2} / \lambda^D_1$ on planar bounded domains,
 
 \[
\frac{\lambda^D_{2}(\Omega)}{\lambda^D_{1}(\Omega)} \leq 3.
\]
They then conjectured that this ratio should achieve its maximum if and only if when the domain is an $n$-ball $\B^n$,
$$ \frac{\lambda^D_{2}(\O)}{\lambda^D_1(\O)} \leq \frac{\lambda^D_2(\mathbb{B}^n)}{\lambda^D_1(\mathbb{B}^n)}.
$$
This long-standing conjecture was eventually proven 35 years later in 1991 by Ashbaugh and Benguria~\cite{Ash-Ben,Ash-Ben2} following a series of improvements \cite{brands1964bounds,Thomp,vries1967upper,Hile-Protter,chiti1983bound,chiti1981inequalities,marcellini1980bounds,Yang}; see also \cite{ashbaugh1994isoperimetric,ashbaugh1996bounds,benguria2007second,Ash1999survol} for more literature on the topic. More generally, the PPW conjecture states that for $k\geq2$, 
\begin{equation}\label{Thompson}
    \frac{\lambda^D_{k+1}(\Omega)}{\lambda^D_k(\Omega)} < \frac{\lambda^D_2(\mathbb{B}^n)}{\lambda^D_1(\mathbb{B}^n)},
\end{equation}
 with equality achieved in the limit by a sequence of domains degenerating into $k$ disjoint $n$-balls of equal volume. This phenomenon of bubbling often arises in spectral optimization problems of various eigenvalue functionals \cite{GNP,BucurHenrot,Kim,EddaoudiGirouard,GL,GP,Petrides2014,karpukhinStern2023,bucur2022sharpsphere,KNPP1,KNPP2}, and it remains not yet fully understood to this day.

In higher dimension, the bound given by PPW becomes naturally $1+ \frac{4}{n}$, and Thompson ~\cite{Thomp} extended it in 1969 to  the general ratio $\l_{k+1}^D/\l_k^D$ by
$$\frac{\lambda^D_{k+1}(\Omega)}{\lambda^D_k(\Omega)} \leq 1 + \frac{4}{n}.
$$
Since then, other types of universal inequalities have been discovered over time. For example, we cite the inequality by Hile and Protter \cite{Hile-Protter} in 1980,
\[
\sum_{i=1}^{k} \frac{\lambda^D_{i}(\Omega)}{\lambda^D_{k+1}(\Omega) - \lambda^D_{i}(\Omega)} \geq \frac{kn}{4}.
\]
The quadratic inequality by Yang \cite{Yang} in 1991,
\begin{equation} \label{Yang}
    \sum_{i=1}^{k} (\lambda^D_{k+1} - \lambda^D_i)^2 \leq \frac{4}{n} \sum_{i=1}^{k} (\lambda^D_{k+1} - \lambda^D_i) \lambda^D_i.
\end{equation}
We have the following implications
\[
\text{(Yang)} \implies \text{(Hile-Protter)} \implies \text{(Thompson-PPW)}.
\]
There exists a broad spectrum of conjectures regarding universal inequalities, such as
  $$ \frac{\lambda_{2k}^D(\Omega)}{\lambda_k^D(\Omega)} \leq \frac{\lambda^D_{2}(\mathbb{\B }^n)}{\lambda^D_{1}(\mathbb{\B }^n)}.$$
     $$ \frac{\lambda_{n+2}^D(\Omega)}{\lambda_1^D(\Omega)} \leq \frac{\lambda^D_{n+2}(\mathbb{\B }^n)}{\lambda^D_{1}(\mathbb{\B }^n)}.$$
    $$ \frac{\lambda_2^D(\Omega) + \cdots + \lambda_{n+1}^D(\Omega)}{\lambda_1^D(\Omega)} \leq \frac{\lambda_2^D(\mathbb{B}^n) + \cdots + \lambda_{n+1}^D(\mathbb{B}^n)}{\lambda_1^D(\mathbb{B}^n)} = \frac{n \lambda^D_{2}(\mathbb{B}^n)}{\lambda^D_{1}(\mathbb{B}^n)}.$$
For additional open problems, see the survey by Ashbaugh and Benguria \cite{Ash1999survol}, and for further references on universal inequalities, see \cite{BrC,Chen-Yang-Zhen,Ch-Ze,Yang-Che-Qin,Ash}.

\subsection{PPW inequalities on closed Riemmanian manifolds}

Let $(M,g)$ be a smooth closed Riemannian manifold of dimension $n \geq 3$, and let
$$\lambda_0(M,g) = 0 < \lambda_1(M,g) \leq \lambda_2(M,g) \leq \cdots \nearrow \infty$$
denote the eigenvalues of the Laplace-Beltrami operator $\Delta_g$. 

Let $(f_j)_{j\geq 0}$ be an orthonormal basis of $L^2(M,g)$ corresponding to the eigenvalues $\lambda_j(M,g)$. We use one of the standard variational characterizations of $\lambda_k(M,g)$
\begin{gather}\label{caravariation}
\lambda_k(M,g)=\min_{f \in A_k\setminus\{0\}}\frac{\int_M|\nabla f|^2\,dv_g}{\int_M f^2\,dv_g},
\end{gather}
where $A_k$ is the following subspace of the Sobolev space $H^1(M)$: 
$$A_k = \setc*{ f \in  H^1(M)} {\int_M ff_j dv_g=0 \text{ for }j=0,1,\cdots,k-1}.$$ 
Functions in $A_k$ are said to be \emph{admissible} and they are used as trial functions in~\eqref{caravariation} to provide upper bounds on $\lambda_k(M,g)$.

Unlike the Euclidean case with Dirichlet boundary conditions, the functional \[ g \mapsto \frac{\lambda_{k+1}(M,g)}{\lambda_k(M,g)} \] is unbounded above. This can be illustrated for example with a manifold that degenerates into \( k+1 \) disconnected components such as Cheeger dumbbells. Indeed in this situation, one have $\lambda_k \to 0$, while $\lambda_{k+1} \to c$, where $c$ is non zero constant. Moreover, such behaviour can even occur when restricting to a given conformal class (bubbling phenomenon); thus, it is not entirely clear what would be the corresponding PPW inequality in the Riemannian setting.

However, El Soufi, Harrell, and Ilias \cite{EHI} (EHI for short) proved in 2007 an inequality that can be seen as Yang's inequality \eqref{Yang} for closed manifolds. It particularly features a geometric term: the mean curvature.

Let $X: M \to \mathbb{R}^m$ be an isometric immersion, and let $H$ be the mean curvature vector of $X$; the trace of its second fundamental form. Then for each $k \geq 0$
\begin{equation}\label{Yanggeneral}
    \sum_{i=0}^{k} (\lambda_{k+1} - \lambda_i)^2 \leq \frac{4}{n} \sum_{i=0}^{k} (\lambda_{k+1} - \lambda_i) \left( \lambda_i + \frac{\| |H|^2 \|_\infty}{4} \right).
\end{equation}
Here by abuse of notation, $\lambda_i := \lambda_i(M,g)$.

Other variants of this inequality can be found in \cite{Harr,Ily-Lap}. In the same fashion that Yang's inequality \eqref{Yang} implies Thompson's inequality \eqref{Thompson}, EHI's inequality \eqref{Yanggeneral} implies \cite[Corollary 2.1]{EHI}
\begin{equation}\label{EHI PPW inequality}
    \lambda_{k+1}(M,g)- \left(1+\frac{4}{n}\right) \lambda_k(M,g) \leq \frac{\| |H|^2\|_\infty}{n}.
\end{equation}

This last inequality is the starting point of this paper, where we consider inequalities of the form  
\[
\lambda_{k+1} - A \lambda_k \leq B,
\]
where \( A \) and \( B \) are explicit geometric quantities independent of \( k \). Observe that this inequality immediately implies that \( A > 1 \) and \( B > 0 \). Indeed, if \( B \leq 0 \), this would imply \(\frac{\lambda_{k+1}}{\lambda_k} \leq A\), which cannot hold in general, as discussed earlier. Similarly, if \( A \leq 1 \), it would suggest that \(\lambda_{k+1} - \lambda_k \leq B\), which is trivially false in many examples such as the standard sphere. We obtain the following results.

The first one is expressed within the conformal class of the canonical metric \( g_0 \) on the sphere \( \mathbb{S}^n \).
\begin{thm}\label{Thm1}
Let $(\S^n,g)$ be the sphere of dimension \( n \geq 3 \) equipped with a metric $g$ conformal to its canonical metric \( g_0 \). Then for all \( k \geq 1 \)
$$
\lambda_{2k+1}(M,g) - \left( 1+\frac{4}{n-2}\right) \lambda_{2k}(M,g) \leq \frac{1}{n-1} \max_{M} S_g.
$$
\end{thm}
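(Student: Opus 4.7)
The strategy is a conformal adaptation of the commutator/projection technique underlying \cite{EHI}: we build test functions from the canonical embedding $X=(X_1,\ldots,X_{n+1})\colon \Sn\hookrightarrow\R^{n+1}$, and turn the conformal factor into $S_g$ via the Yamabe identity. Write $g=e^{2\varphi}g_0$, set $v=e^{(n-2)\varphi/2}$ so that $g=v^{4/(n-2)}g_0$, and let $u$ be a unit $L^2(g)$-norm eigenfunction for $\lambda_{2k}(\Sn,g)$.

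Using $\sum_\alpha X_\alpha^2=1$, $\sum_\alpha X_\alpha\nabla X_\alpha=0$, and $\sum_\alpha|\nabla X_\alpha|_g^2=ne^{-2\varphi}$, a direct computation gives
\[
\sum_{\alpha=1}^{n+1}\int_{\Sn}|\nabla(X_\alpha u)|_g^2\,dv_g \;=\; \lambda_{2k} \;+\; n\int_{\Sn}u^2 e^{-2\varphi}\,dv_g,\qquad \sum_{\alpha=1}^{n+1}\int_{\Sn}(X_\alpha u)^2\,dv_g \;=\; 1.
\]
Provided the $X_\alpha u$ can be made $L^2(g)$-orthogonal to $f_0,\ldots,f_{2k}$, the variational characterization \eqref{caravariation} applied to each and summed yields $\lambda_{2k+1}(\Sn,g) \leq \lambda_{2k}(\Sn,g) + n\int u^2 e^{-2\varphi}\,dv_g$.

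To convert this into the claimed bound, use the Yamabe equation $\tfrac{4(n-1)}{n-2}\Delta_{g_0}v=n(n-1)v-S_g v^{(n+2)/(n-2)}$: multiplying by $u^2 v$, integrating on $(\Sn,g_0)$, and using $u^2 v^{2n/(n-2)}\,dv_{g_0}=u^2\,dv_g$ gives
\[
n(n-1)\int u^2 v^2\,dv_{g_0} \;=\; \int u^2 S_g\,dv_g \;+\; \tfrac{4(n-1)}{n-2}\int u^2 v\,\Delta_{g_0} v\,dv_{g_0}.
\]
Integration by parts followed by the elementary Cauchy--Schwarz bound $2|uv\langle\nabla u,\nabla v\rangle_{g_0}|\leq v^2|\nabla u|_{g_0}^2+u^2|\nabla v|_{g_0}^2$ gives $\int u^2 v\Delta_{g_0}v\,dv_{g_0}\leq \int v^2|\nabla u|_{g_0}^2\,dv_{g_0}=\|\nabla u\|_g^2=\lambda_{2k}$. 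Combining with the identity $\int u^2v^2\,dv_{g_0}=\int u^2 e^{-2\varphi}\,dv_g$, the pointwise bound $S_g\leq\max_M S_g$, and the normalization $\int u^2\,dv_g=1$, we conclude
\[
n\int_{\Sn}u^2 e^{-2\varphi}\,dv_g \;\leq\; \tfrac{4}{n-2}\lambda_{2k} \;+\; \tfrac{\max_M S_g}{n-1},
\]
which, together with the previous display, is exactly the theorem.

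The main obstacle is the admissibility hypothesis used above. Hersch's classical balancing over the $(n+1)$-parameter family of Möbius transformations of $\Sn$ enforces orthogonality to only one prescribed function (typically $f_0$), whereas admissibility for $\lambda_{2k+1}$ requires $(n+1)(2k+1)$ conditions---impossible to match by parameter count when $k\geq 1$. My proposed remedy is the projection/commutator technique of \cite{EHI}: set $\psi_\alpha=X_\alpha u-\sum_{j\leq 2k}\langle X_\alpha u,f_j\rangle_g f_j$, which is admissible by construction, and expand the resulting Rayleigh estimates; the defect term $\sum_{\alpha,j\leq 2k}(\lambda_{2k+1}-\lambda_j)\langle X_\alpha u,f_j\rangle^2$ that appears should be absorbable into a weighted Yang-type inequality analogous to \eqref{Yanggeneral}. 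Ensuring that this absorption produces the specific index pairing $(2k+1,2k)$---rather than the $(k+1,k)$ of the isometric EHI inequality---and reproduces the sharp constants $1+\tfrac{4}{n-2}$ and $\tfrac{1}{n-1}$ is, in my view, the main technical hurdle.
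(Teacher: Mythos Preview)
Your proposal contains a genuine gap, which you yourself flag: the admissibility of the test functions $X_\alpha u$ for $\lambda_{2k+1}$ is left unresolved. Your suggested fix via the EHI projection technique will not produce the statement as written. The EHI commutator machinery naturally compares $\lambda_{k+1}$ with a weighted average of $\lambda_0,\dots,\lambda_k$ and, at best, yields a $(k+1,k)$-type inequality; there is no mechanism in that framework that singles out the pairing $(2k+1,2k)$, and the defect term $\sum_{\alpha,j}(\lambda_{2k+1}-\lambda_j)\langle X_\alpha u,f_j\rangle^2$ cannot be absorbed to recover the clean constants $1+\tfrac{4}{n-2}$ and $\tfrac{1}{n-1}$.

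The paper resolves admissibility by an entirely different, topological device. Rather than fixing $u$ as a single $\lambda_{2k}$-eigenfunction, one lets $u_p=\sum_{i=0}^{2k}p_if_i$ range over all unit combinations of the first $2k+1$ eigenfunctions, parametrized by $p\in\S^{2k}$. For each $p$, a Hersch-type center-of-mass argument produces a M\"obius map $\phi_{\xi_p}$ so that the coordinate functions $X_i\circ\phi_{\xi_p}$ are $d\mu_p$-balanced, where $d\mu_p=u_p^2\,dv_g$. One then forms the map $\mathcal F\colon\S^{2k}\to\R^{2k+1}$ whose $j$-th component is $\int_{\Sn}\bigl(\sum_i X_i\circ\phi_{\xi_p}\bigr)u_p\,f_j\,dv_g$; the balancing condition forces $\langle\mathcal F(p),p\rangle=0$, so $\mathcal F$ is a continuous \emph{tangent} vector field on the even-dimensional sphere $\S^{2k}$. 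The Hopf--Poincar\'e theorem then guarantees a zero $q$, and at that $q$ the test function is admissible for $\lambda_{2k+1}$. This is precisely why the theorem involves $\lambda_{2k+1}$ versus $\lambda_{2k}$: the hairy-ball argument requires an even-dimensional parameter sphere, and the paper explicitly remarks that odd indices are out of reach of this method.

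A secondary difference: once admissibility is secured, the paper does not manipulate the Yamabe equation directly as you do. Instead, after the splitting identity $\int|\nabla(v_iu)|^2=\int v_i^2u\Delta_gu+\int u^2|\nabla v_i|^2$ and H\"older on the second term (using the conformal invariance of $\bigl(\int(\sum_i|\nabla v_i|^2)^{n/2}\bigr)^{2/n}=n\,w_n^{2/n}$), it reaches $\lambda_{2k+1}-\lambda_{2k}\le n\,w_n^{2/n}\|u\|_{2^*}^2$ and then applies Hebey's sharp conformal Sobolev inequality on $[g_0]$, namely $\|u\|_{2^*}^2\le K(n,2)^2\|\nabla u\|_2^2+\tfrac{n-2}{4(n-1)}K(n,2)^2\max S_g\,\|u\|_2^2$, together with $\|\nabla u\|_2^2\le\lambda_{2k}$ and $\|u\|_2^2=1$. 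Your direct Yamabe-equation route is morally equivalent (Hebey's inequality is derived from it), but note that your integration-by-parts step $\int u^2v\Delta_{g_0}v\le\int v^2|\nabla u|_{g_0}^2$ does not follow from the Cauchy--Schwarz bound you wrote: after expanding $\int u^2v\Delta_{g_0}v=\int 2uv\langle\nabla u,\nabla v\rangle_{g_0}+\int u^2|\nabla v|_{g_0}^2$, the inequality $2uv\langle\nabla u,\nabla v\rangle\le v^2|\nabla u|^2+u^2|\nabla v|^2$ gives an extra $+2\int u^2|\nabla v|_{g_0}^2$, not the claimed bound.
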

Theorem \ref{Thm1} can be extended to Riemannian manifolds with a strictly positive Yamabe constant \( Y(M, [g]) \) through the concept of the \( m \)-conformal volume \( V_c(m, M, [g]) \) of Li and Yau \cite{Li-Yau}. Let us briefly recall their definitions: given a conformal immersion $\phi:M\to\S^m\subset\R^{m+1}$, let
$$V_c(m,\phi)=\sup_{\tau\in\text{Aut}(\S^m)}\text{vol}(M,(\tau\circ\phi)^{\star}g_0).$$
The \emph{$m$-conformal volume} of a conformal class $C=[g]$ is defined to be
$$V_c(m,M,C)=\inf_{\phi:M\to\S^m}V_c(m,\phi),$$
where the infimum is taken over all conformal immersions. By a combination of Nash embedding theorem and a stereographic projection, this is is well-defined once $m$ is large enough.
This geometric quantity has deep connections to the theory of $\l_1$-maximal maps and minimal surfaces, as discussed in \cite{ElSoufiIlias1984, el1986immersions, ElsoufiIlias2000,Li-Yau}.
Meanwhile the well-known Yamabe constant is defined as
\begin{equation}\label{funcionalY}
    Y(M, C) = \inf_{h \in C} \frac{\int_M S_h \, dv_h}{\left( \int_M dv_h \right)^{\frac{n-2}{n}}},
\end{equation}
where \( S_h \) is the scalar curvature of \( h \).

Both $Y(M, C)$ and $V_c(m,M,C)$ are conformal invariants.

\begin{thm}\label{Thm1bis}
Let $(M,g)$ be a closed Riemannian manifold of dimension $n\geq 3$, and let $C=[g]$. Suppose that $Y(M,C)>0$, then for any integer \( m > 0 \) such that the \( m \)-conformal volume of $C$ is well-defined and for all \( k \geq 1 \), 
$$
\lambda_{2k+1}(M,g) - \left( 1+\frac{4n(n-1)V_c(m,M,C)^{2/n}}{(n-2)Y(M,C)}\right) \lambda_{2k}(M,g) \leq \frac{nV_c(m,M,C)^{2/n}\|S_g\|_\infty }{Y(M,C)}.
$$
\end{thm}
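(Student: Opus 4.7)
The plan is to adapt the commutator/Yang scheme from \cite{EHI} that produces \eqref{EHI PPW inequality}, but replace the isometric immersion into $\R^m$ by a conformal immersion into $\S^m\subset\R^{m+1}$ and absorb the resulting conformal distortion factor through the sharp conformal Sobolev inequality encoded by the Yamabe constant.

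First I would fix $\epsilon>0$ and choose a conformal immersion $\phi\colon M\to\S^m$ with $V_c(m,\phi)\leq V_c(m,M,C)+\epsilon$. For any Möbius transformation $\tau\in\mathrm{Aut}(\S^m)$, the pullback metric $\tilde g=(\tau\circ\phi)^{\star}g_0=e^{2\omega}g$ is conformal to $g$ and satisfies $\vol{M,\tilde g}\leq V_c(m,\phi)\leq V_c(m,M,C)+\epsilon$; the freedom in $\tau$ will be used via a Hersch-type balancing to clean up the contributions from the constant eigenfunction $f_0$. Writing $\psi_\alpha=(\tau\circ\phi)_\alpha$ for $\alpha=0,\ldots,m$, the conformality of $\phi$ and the inclusion $\psi(M)\subset\S^m$ supply the two identities $\sum_\alpha\psi_\alpha^2\equiv 1$ and $\sum_\alpha|\nabla_g\psi_\alpha|^2=ne^{2\omega}$, which are the only geometric inputs used about $\psi$.

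For each $i\in\{0,\dots,2k\}$ and each $\alpha$ I would then form the trial function
\[
G_{i,\alpha}=\psi_\alpha f_i-\sum_{j=0}^{2k}c_{i,\alpha,j}f_j\in A_{2k+1},\qquad c_{i,\alpha,j}=\int_M\psi_\alpha f_i f_j\,dv_g,
\]
substitute into \eqref{caravariation}, and sum over $\alpha$. Using the two identities above together with integration by parts ($\sum_\alpha\psi_\alpha\nabla\psi_\alpha\cdot\nabla f_i=\tfrac12\nabla(|\psi|^2)\cdot\nabla f_i=0$), the Rayleigh quotient estimate collapses to
\[
\lambda_{2k+1}\leq\lambda_i+n\int_M e^{2\omega}f_i^2\,dv_g+\sum_{j=0}^{2k}(\lambda_{2k+1}-\lambda_j)\sum_{\alpha}c_{i,\alpha,j}^2,
\]
in which the last cross-term is nonnegative and would spoil the bound if left as is. The crucial step, following the Yang quadratic manipulation carried out in \cite{EHI}, is to exploit the symmetry $c_{i,\alpha,j}=c_{j,\alpha,i}$, multiply by $(\lambda_{2k+1}-\lambda_i)$ before summing in $i$, and invoke Yang's trick so as to absorb the cross-term; the expected output is a clean inequality of the form $\lambda_{2k+1}-\lambda_{2k}\leq n\int_M e^{2\omega}f_{2k}^2\,dv_g$.

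Finally I would estimate the right-hand side by Hölder's inequality with conjugate exponents $n/2$ and $n/(n-2)$, which yields $\int_M e^{2\omega}f_{2k}^2\,dv_g\leq\vol{M,\tilde g}^{2/n}\|f_{2k}\|_{L^{2n/(n-2)}}^2$, and then invoke the sharp conformal Sobolev inequality that is precisely the definition of $Y(M,C)$,
\[
\|u\|_{L^{2n/(n-2)}}^2\leq\frac{4(n-1)}{(n-2)Y(M,C)}\left(\int_M|\nabla u|_g^2\,dv_g+\frac{n-2}{4(n-1)}\int_M S_g u^2\,dv_g\right),
\]
applied to $u=f_{2k}$. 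Chaining these three estimates, sending $\epsilon\to 0$, and rearranging produces the stated inequality with the exact constants $\frac{4n(n-1)V_c(m,M,C)^{2/n}}{(n-2)Y(M,C)}$ and $\frac{nV_c(m,M,C)^{2/n}\|S_g\|_\infty}{Y(M,C)}$. The main obstacle I anticipate is the middle step: the naive variational inequality for $G_{i,\alpha}$ is essentially the Parseval expansion of $\psi_\alpha f_i$ and is therefore tautological, so the proof genuinely requires the Yang-EHI quadratic refinement to extract coefficient exactly $1$ in front of $\lambda_{2k}$, and carrying this refinement out in the presence of the conformal weight $e^{2\omega}$ is what ultimately forces the use of the \emph{sharp} (Yamabe) Sobolev constant rather than any generic Sobolev estimate.
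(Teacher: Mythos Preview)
Your proposal has a genuine gap at precisely the point you flag as ``the main obstacle''. The displayed inequality
\[
\lambda_{2k+1}\leq\lambda_i+n\int_M e^{2\omega}f_i^2\,dv_g+\sum_{j=0}^{2k}(\lambda_{2k+1}-\lambda_j)\sum_{\alpha}c_{i,\alpha,j}^2
\]
is correct, but the Yang--EHI quadratic manipulation does \emph{not} absorb the cross-term in the way you hope. Multiplying by $(\lambda_{2k+1}-\lambda_i)$ and summing over $i$ produces a quadratic inequality of the form $\sum_i(\lambda_{2k+1}-\lambda_i)^2\leq\ldots$, from which one extracts a Thompson-type bound with coefficient $1+\tfrac{4}{n}$ in front of $\lambda_{2k}$ and a geometric term of $L^\infty$ type (cf.\ \eqref{Yanggeneral}--\eqref{EHI PPW inequality}). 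In the conformal setting that $L^\infty$ term would be $\sup_M e^{2\omega}$, which is not controlled by the conformal volume; the latter only bounds $\int_M e^{n\omega}\,dv_g$. So your ``expected output'' $\lambda_{2k+1}-\lambda_{2k}\leq n\int_M e^{2\omega}f_{2k}^2\,dv_g$ is not what the Yang machinery delivers, and without it the H\"older--Yamabe step cannot be reached. A further warning sign: nothing in your scheme singles out the odd index $2k+1$; if the argument worked, it would give $\lambda_{k+1}-\lambda_k$ for every $k$, which the paper explicitly cannot obtain by its method.

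The paper eliminates the cross-term by a completely different, topological mechanism. For each $p\in\mathbb{S}^{2k}$ it forms the density $d\mu_p=(\sum_{i=0}^{2k}p_i f_i)^2\,dv_g$, balances it by a M\"obius map $\phi_{\xi_p}$, and then packages the $2k+1$ orthogonality defects $\int_M h(p,\cdot)f_j\,dv_g$ into a continuous tangent vector field $\mathcal{F}$ on $\mathbb{S}^{2k}$. The Hopf--Poincar\'e theorem on the even-dimensional sphere $\mathbb{S}^{2k}$ forces a zero $q$, and at that zero the single function $u=\sum_{i=0}^{2k}q_i f_i$ (times the summed coordinate functions) is genuinely admissible for $\lambda_{2k+1}$, so no projection terms $c_{i,\alpha,j}$ ever appear. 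The integration-by-parts identity (Proposition~\ref{proposition_decomposition_pd}) then yields directly
\[
\lambda_{2k+1}-\lambda_{2k}\leq n\int_M e^{2\omega}u^2\,dv_g\leq nV_c(m,M,C)^{2/n}\|u\|_{2^*}^2,
\]
with $\|u\|_2=1$ and $\|\nabla u\|_2^2\leq\lambda_{2k}$, after which your final H\"older--Yamabe step goes through verbatim. The restriction to $\lambda_{2k+1}$ versus $\lambda_{2k}$ is the price of the hairy-ball argument, and the paper comments that removing it would likely require a different (folding) construction at the cost of weaker constants.
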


\begin{rem}
    An important observation in Theorem \ref{Thm1} and Theorem \ref{Thm1bis} is the emergence of an intrinsic geometric quantity: scalar curvature. Scalar curvature is often regarded as the intrinsic counterpart to mean curvature \cite{GRO}, and numerous results establish comparison theorems under constraints involving one of these two quantities. In particular, it is well known that is can be prescribed within a conformal class by the Kazdan-Warner constraint formula \cite{kazdan1974curvature, kazdan1985prescribing, bourguignon1987scalar,Heb2}. To this end, Theorem \ref{Thm1} can be applied for example to construct metrics that exhibit a controlled gap between consecutive eigenvalues.

More generally, the estimate provided by the scalar curvature can be stronger than the one given by the mean curvature. For example, when \( M \) is a hypersurface of \( \mathbb{R}^{n+1} \) with principal curvatures \( (\kappa_i)_i \), Gauss equation leads to
\begin{equation}
S_g = \sum_{i \neq j} \kappa_i \kappa_j = \left( \sum_{i=1}^{n} \kappa_i \right)^2 - \sum_{i=1}^{n} \kappa_i^2 := |H|^2 - \| \RN{2} \|^2,
\end{equation}
where \( \RN{2} \) is the second fundamental form of the hypersurface \( M \).

Applying Schwarz inequality yields
\begin{equation}
\frac{1}{n-1} S_g \leq \frac{1}{n} |H|^2,
\end{equation}
with equality holding only at umbilical points, i.e., points where all the principal curvatures are equal. See \cite{ros1988compact, do1992riemannian, petersen2006riemannian} for further details on these relations.
\end{rem}
\begin{rem}
Whenever both \( Y(M,C) \) and \( V_c(m,M,C) \) can be computed, explicit bounds can be derived from Theorem~\ref{Thm1bis}. For example, the \( m \)-conformal volume is computable when there is a minimal immersion into \( \mathbb{S}^m \) by the first eigenfunctions \cite{el1986immersions}, such situations are covered in details in \cite{ElSoufiIlias1984,EddaoudiGirouard}. Additionally, a positive Yamabe constant implies that the conformal class contains a metric with positive scalar curvature---a consequence of the Yamabe problem \cite{yamabe1960deformation,schoen1984conformal,trudinger1968remarks,aubin1976equations}---and in such cases, it can also be computed.

\end{rem}

Instead of fixing a conformal class, our second result requires a positive lower bound on the Ricci curvature.
\smallskip

Denote by $$\blk(M,g) := \lambda_k(M,g) \text{vol}(M,g)^{2/n}$$ the \( k \)-th eigenvalue normalized by the volume.

\begin{thm}\label{Thm2}
Let \( (M,g) \) be a smooth closed manifold of dimension \( n \geq 3 \) such that \( \mathrm{Ric} \geq (n-1)a^2, \quad \text{with } a > 0 \). Then, for any integer \( m > 0 \) such that the \( m \)-conformal volume of $C=[g]$ is well-defined and for all \( k \geq 1 \),
$$\li -\la \left(1+\frac{4V_c(m,M,C)^\frac{2}{n}}{(n-2)a^2\text{vol}(M,g)^\frac{2}{n}} \right) \leq n V_c(m,M,C)^\frac{2}{n} .$$
\end{thm}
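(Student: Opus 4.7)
The proof will follow the variational template of Theorems~\ref{Thm1} and~\ref{Thm1bis}: build admissible trial functions for $\lambda_{2k+1}(M,g)$ from the coordinate functions of a near-optimal conformal immersion $\phi:M\to\Sm\subset\R^{m+1}$, and control the resulting Rayleigh quotient using the Ricci hypothesis in place of the scalar-curvature / Yamabe input. The shape of the target inequality is suggestive: the additive term $nV_c(m,M,C)^{2/n}$ is exactly the Li--Yau upper bound for $\overline{\lambda}_1$, so the plan is to promote the Li--Yau argument to higher eigenvalues via an averaging trick and Ilias's sharp Sobolev inequality.

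Fix $\varepsilon>0$ and a conformal immersion $\phi=(\phi^1,\ldots,\phi^{m+1})$ with $V_c(m,\phi)\leq V_c(m,M,C)+\varepsilon$; write $\phi^{\star}g_0=\mu^2 g$. The standard identities $\sum_i(\phi^i)^2=1$, $\sum_i|\nabla\phi^i|_g^2=n\mu^2$ and $\int_M\mu^n\,dv_g\leq V_c(m,\phi)$ will be used throughout. For each $j=0,\ldots,k$ consider the $(m+1)$-parameter family $\{\phi^i f_j\}_i$ of candidate trial functions. After composing $\phi$ with a suitable automorphism of $\Sm$ (a Hersch/Li--Yau center-of-mass trick applied to the weighted measures $f_j^2\,dv_g$) and extracting a suitable subspace of the $(k+1)(m+1)$-dimensional span of the products $\phi^i f_j$, one produces a family of functions orthogonal in $L^2(M,g)$ to $f_0,\ldots,f_{2k}$, hence admissible for $\lambda_{2k+1}$ through the min-max characterisation.

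Expanding $|\nabla(\phi^i f_j)|^2=(\phi^i)^2|\nabla f_j|^2+f_j^2|\nabla\phi^i|^2+2\phi^i f_j\langle\nabla\phi^i,\nabla f_j\rangle$, summing over $i$, and using $\sum_i \phi^i\nabla\phi^i=\tfrac12\nabla\bigl(\sum_i (\phi^i)^2\bigr)=0$ to kill the cross terms, the averaged Rayleigh inequality collapses to
\[
\lambda_{2k+1}\;\leq\;\lambda_j\;+\;n\,\frac{\int_M \mu^2 f_j^2\,dv_g}{\int_M f_j^2\,dv_g}.
\]
The decisive step is bounding the weighted integral of $\mu^2$. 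H\"older's inequality with conjugate exponents $(n/2,n/(n-2))$ gives $\int \mu^2 f_j^2\leq(\int\mu^n)^{2/n}(\int f_j^{2n/(n-2)})^{(n-2)/n}$, and the Sobolev factor is then controlled by Ilias's sharp Sobolev inequality on closed manifolds with $\mathrm{Ric}\geq(n-1)a^2$:
\[
\|f\|^2_{L^{2n/(n-2)}}\;\leq\;\frac{4}{n(n-2)a^2\text{vol}(M,g)^{2/n}}\,\|\nabla f\|_{L^2}^2\;+\;\frac{1}{\text{vol}(M,g)^{2/n}}\,\|f\|_{L^2}^2.
\]
Applying this with $j=k$, using $\lambda_k\leq\lambda_{2k}$, multiplying through by $\text{vol}(M,g)^{2/n}$, and letting $\varepsilon\to 0$, all constants align exactly with those of the statement.

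The main obstacle is the orthogonalisation step. Since $\dim\mathrm{Conf}(\Sm)=m+1$ while admissibility of each $\phi^i f_j$ against $f_0,\ldots,f_{2k}$ would impose $(m+1)(2k+1)$ conditions, a single M\"obius adjustment cannot suffice once $k$ is large, and one must combine the Hersch trick with a topological (Brouwer / degree) or linear-algebraic subspace argument on the products $\phi^i f_j$, keeping tight enough $L^2$-control that the averaged Rayleigh identity survives with the sharp constants claimed. Once this is set up, the rest reduces to direct manipulations of Rayleigh quotients and the Ilias Sobolev inequality.
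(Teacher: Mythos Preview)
Your overall strategy---conformal immersion, H\"older to extract the conformal-volume factor, then Ilias's Ricci-Sobolev inequality---matches the paper. But the orthogonalisation step, which you yourself flag as ``the main obstacle'', is a genuine gap, and your sketch of it does not work.

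You propose to use products $\phi^i f_j$ for $j=0,\dots,k$, apply a Hersch centre-of-mass to each $f_j^2\,dv_g$, and then extract from the $(k+1)(m+1)$-dimensional span a subfamily orthogonal to $f_0,\dots,f_{2k}$. The problem is that a generic element of that span has the form $\sum_{i,j}c_{ij}\phi^i f_j$, and its Dirichlet energy contains cross terms in $j$ that do \emph{not} vanish; the identity $\sum_i\phi^i\nabla\phi^i=0$ only kills the cross terms in $i$. So even if linear algebra produces an admissible element, you lose the clean averaged Rayleigh identity $\lambda_{2k+1}\le\lambda_j+n\int\mu^2 f_j^2/\int f_j^2$ that your estimate relies on. Applying the identity with the single index $j=k$ (and then $\lambda_k\le\lambda_{2k}$) presupposes that the $m+1$ functions $\phi^i f_k$ are themselves admissible for $\lambda_{2k+1}$, which a single M\"obius recentring cannot arrange once $k\ge 1$.

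The paper's device avoids this entirely. Instead of working with several eigenfunctions $f_j$, it parametrises by $q\in\mathbb{S}^{2k}$ and sets $u_q=\sum_{i=0}^{2k}q_i f_i$. For each $q$ the centre-of-mass argument (applied to the \emph{single} measure $u_q^2\,dv_g$) yields $\xi_q\in\mathbb{B}^{m+1}$. One then forms the map
\[
\mathcal{F}(q)=\Bigl(\textstyle\int_M h(q,\cdot)\,f_j\,dv_g\Bigr)_{j=0}^{2k},\qquad h(q,x)=\Bigl(\sum_i X_{e_i}\circ\phi_{\xi_q}\circ\phi\Bigr)\,u_q,
\]
and checks, using the centring of $\xi_q$, that $\langle\mathcal{F}(q),q\rangle=0$, so $\mathcal{F}$ is a continuous tangent vector field on the even-dimensional sphere $\mathbb{S}^{2k}$. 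Hopf--Poincar\'e then gives a zero $q$, and $h(q,\cdot)$ is admissible for $\lambda_{2k+1}$. After a diagonalisation of the associated bilinear form one arrives at
\[
\lambda_{2k+1}(M,g)-\lambda_{2k}(M,g)\;\le\; nV_c(m,M,C)^{2/n}\,\|u\|_{2n/(n-2)}^{2},
\]
with $u=u_q$. Because $u$ lies in $\mathrm{span}\{f_0,\dots,f_{2k}\}$ and $\|u\|_2=1$, one has $\|\nabla u\|_2^2\le\lambda_{2k}$ directly---no detour through $\lambda_k\le\lambda_{2k}$ is needed. Plugging in Ilias's inequality \eqref{IliasRicc} then gives the theorem with exactly the stated constants. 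The missing idea in your proposal is precisely this: encode the $2k{+}1$ orthogonality constraints as the zero of a tangent vector field on $\mathbb{S}^{2k}$, rather than trying to impose them on products $\phi^i f_j$ indexed by several eigenfunctions.
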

\begin{rem}
In general, imposing a lower bound on the Ricci curvature is a strong condition. For example, well-known estimates by Buser \cite{buser1982note} and Gromov \cite[Appendix C]{gromov1999metric}, as presented in Hassannezhad, Kokarev, and Polterovich \cite[Theorem 1.1 and Theorem 1.3]{hassannezhad2016eigenvalue}, state that when
\[
\operatorname{Ric} \geq -\kappa(n - 1)g,
\]
where \(\kappa > 0\), there exist constants \(C_1\) and \(C_2\) depending only on the dimension \(n\) of \(M\), such that for all \(k \geq 1\),
\begin{gather}
\lambda_k(M, g) \leq \frac{(n - 1)^2 \kappa}{4} + C_1 \left(\frac{k}{\text{vol}(M,g)}\right)^{2/n}, \label{eq:volume_bound} \\
\lambda_k(M, g) \geq C_2^{1 + d\sqrt{\kappa}} d^{-2} k^{2/n}, \label{eq:lower_bound}
\end{gather}
where \(d = d(M)\) is the diameter of \(M\).

Therefore by combining these upper and lower bounds, one can derive a relation of the form
\[
\lambda_{k+1}(M, g) - A \lambda_k(M, g) \leq B, \label{eq:combined_bound}
\]
where
\[
A = \frac{C_1 d^2}{C_2^{1 + d\sqrt{\kappa}} \text{vol}(M, g)^{2/n}}, \quad \text{and} \quad B = \frac{(n-1)^2 \kappa}{4} + (2^{2/n} - 1) \frac{C_1}{\text{vol}(M, g)^{2/n}}.
\]
However, these bounds are not effective, as the explicit expression of \(C_1\) and \(C_2\) is known to be very large.
\end{rem}

Finally, our third result is the most general one we obtain, and it involves several interesting geometric quantities interrelated. Let \( C(M,g) \) be the isoperimetric constant
$$
C(M,g) = \min_{\text{vol}(\Omega) \leq \frac{\text{vol}(M,g)}{2}} \frac{\text{vol}(\partial \Omega)}{\text{vol}(\Omega)^{\frac{n-1}{n}}},
$$
where \( \Omega \) varies among non-empty domains of \( M \).\\
And let \( C^* \) be its analogue for the unit ball \( \mathbb{B}^n \) in \( \mathbb{R}^n \)
$$
C^* = \frac{w_{n-1}}{\text{vol}(\mathbb{B}^n)^\frac{n-1}{n}}.
$$

\begin{thm}\label{Thm3}
Let \( (M,g) \) be a closed manifold of dimension \( n \geq 3 \). Then, for any integer \( m > 0 \) such that the \( m \)-conformal volume of \( C=[g] \) is well-defined, for all \( k \geq 1 \), we have
$$
\li - \la \left( 1 + \frac{8 C^{*2} V_c(m,M,C)^\frac{2}{n}}{(n-2) C^2(M,g) w_n^\frac{2}{n}} \right) \leq 4 n V_c(m,M,C)^\frac{2}{n}.
$$
\end{thm}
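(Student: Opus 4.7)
My plan is to adapt the argument of Theorem~\ref{Thm2} by substituting the Lichnerowicz-type input ``$\mathrm{Ric}\geq(n-1)a^2 g\Longrightarrow\lambda_1\geq na^2$'' with a lower bound on $\lambda_1(M,g)$ coming from the isoperimetric constant $C(M,g)$ via Cheeger/Federer--Fleming. The common engine is the EHI inequality \eqref{Yanggeneral} applied to trial functions built from a conformal immersion $\phi:M\to\S^m$ composed with a suitable Möbius transformation.

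First, I would fix $\epsilon>0$ and pick a conformal immersion $\phi:M\to\S^m\subset\R^{m+1}$ realizing $V_c(m,\phi)<V_c(m,M,C)+\epsilon$. By a Hersch-type center-of-mass argument on $\text{Aut}(\S^m)$, I would find $\tau\in\text{Aut}(\S^m)$ such that the components $\FF^0,\ldots,\FF^m$ of $\FF=\tau\circ\phi$ serve as admissible trial functions for $\li$. The conformal identity $\FF^\ast g_0=e^{2u}g$ together with $\sum_\alpha(\FF^\alpha)^2\equiv 1$ yields the two global computations
\[
\sum_\alpha\int_M|\nabla\FF^\alpha|^2\,dv_g=n\,\text{vol}(M,\FF^\ast g_0)\leq n(V_c(m,M,C)+\epsilon), \qquad \sum_\alpha\int_M(\FF^\alpha)^2\,dv_g=\text{vol}(M,g).
\]

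Second, inserting these $\FF^\alpha$ into the EHI mechanism (as in the proof of Theorem~\ref{Thm2}), normalizing by volume, and sending $\epsilon\to 0$ should yield an inequality of the form
\[
\li-\la\Bigl(1+\tfrac{c\,V_c(m,M,C)^{2/n}}{(n-2)\,\overline{\lambda}_1(M,g)}\Bigr)\leq c'\,V_c(m,M,C)^{2/n},
\]
reducing the claim to a lower bound on $\overline{\lambda}_1(M,g)=\lambda_1(M,g)\,\text{vol}(M,g)^{2/n}$ in terms of $C(M,g)$. I would derive this by combining Cheeger's inequality $\lambda_1(M,g)\geq h(M,g)^2/4$ with the elementary relation $h(M,g)\geq 2^{1/n}C(M,g)\text{vol}(M,g)^{-1/n}$ (immediate from the definition of $C(M,g)$), giving $\overline{\lambda}_1(M,g)\geq 2^{2/n}C(M,g)^2/4$. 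The numerical identity $C^{\ast 2}=n^2 w_n^{2/n}$ (which follows from $w_{n-1}=n w_n$) then converts this into the coefficient $\tfrac{8C^{\ast 2}}{(n-2)C^2 w_n^{2/n}}$ appearing in the statement.

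The main obstacle I foresee is landing the numerical constants precisely, in particular the factor $4n$ on the right-hand side (versus $n$ in Theorem~\ref{Thm2}) and the factor $8$ (versus $4$) in the coefficient of $\la$. The natural source of these extra $4$'s is the $\tfrac{1}{4}$ in Cheeger's inequality, which is the price of passing from an infinitesimal (curvature) bound to a global (metric/isoperimetric) one. A cleaner route that might match the stated constants exactly is to bypass Cheeger and derive a direct Faber--Krahn-type lower bound on $\lambda_1$ by feeding Federer--Fleming into the sharp Euclidean Sobolev inequality on $\B^n$, which brings $C^\ast$ and $w_n$ into the estimate in a more direct fashion.
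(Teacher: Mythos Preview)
Your proposal rests on two misconceptions that together constitute a genuine gap.

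First, the Hersch-type center-of-mass argument you describe only arranges $\int_M \FF^\alpha\,dv_g=0$ for each $\alpha$, i.e.\ orthogonality to the constant $f_0$. That makes the components $\FF^\alpha$ admissible for $\lambda_1(M,g)$, not for $\lambda_{2k+1}(M,g)$; a single M\"obius parameter $\tau$ cannot force orthogonality to all of $f_0,\ldots,f_{2k}$. The paper handles this by a different construction: one parametrizes a family of measures $d\tilde{\mu}_p=(\sum_{i=0}^{2k}p_if_i)^2\,dv_g$ over $p\in\S^{2k}$, applies the center-of-mass argument to each $d\tilde{\mu}_p$ to obtain $\xi_p\in\B^{m+1}$, and then builds a continuous tangent vector field on $\S^{2k}$ whose zeros (guaranteed by Hopf--Poincar\'e on the even-dimensional sphere) yield a point $q$ for which the resulting function is admissible for $\lambda_{2k+1}$. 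This is precisely why the statement compares $\lambda_{2k+1}$ to $\lambda_{2k}$ rather than arbitrary consecutive eigenvalues.

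Second, the engine is neither the EHI inequality~\eqref{Yanggeneral} nor any lower bound on $\overline{\lambda}_1$. After the topological step and an integration by parts (Proposition~\ref{proposition_decomposition_pd}), one reaches the key inequality~\eqref{inegalivoconfme},
\[
\lambda_{2k+1}(M,g)-\lambda_{2k}(M,g)\leq nV_c(m,M,C)^{2/n}\,\|u\|^2_{2n/(n-2)},\qquad u=\textstyle\sum_{i=0}^{2k}q_if_i.
\]
The isoperimetric constant then enters not through Cheeger's inequality for $\lambda_1$, but directly through Ilias' Sobolev inequality~\eqref{Iliasgene},
\[
\|u\|^2_{2^\ast}\leq 2K(n,2)^2\,\frac{C^{\ast 2}}{C(M,g)^2}\,\|\nabla u\|_2^2+4\,\text{vol}(M,g)^{-2/n}\,\|u\|_2^2,
\]
combined with $\|\nabla u\|_2^2\leq\lambda_{2k}$ and $\|u\|_2^2=1$. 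The factors $8$ and $4n$ in the statement come from the $2$ and the $4$ in~\eqref{Iliasgene} together with $K(n,2)^2=4/\bigl(n(n-2)w_n^{2/n}\bigr)$, not from the $1/4$ in Cheeger. Likewise, in Theorem~\ref{Thm2} the Ricci lower bound enters through the Sobolev constant in~\eqref{IliasRicc}, not through Lichnerowicz's estimate on $\lambda_1$; so the substitution ``replace Lichnerowicz by Cheeger'' you propose has no step in the actual argument at which it could act.
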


The proof of these results features a new construction of a vector field whose vanishing points serve as admissible functions in the variational characterization of eigenvalues. We guarantee the existence of these zero points through topological arguments, such as the center of mass and the Hopf-Poincaré theorem. Then, in order to estimate the Rayleigh quotient of admissible functions, we make use of key results on critical Sobolev embeddings with optimal constants. However, this method unfortunately cannot fully address all the differences between eigenvalues due to the topological nature of the Hopf-Poincaré argument. We could potentially solve this issue by adding a fold to our construction as done classically in \cite{nadirashvili2002isoperimetric,GNP,EddaoudiGirouard,Kim, Petrides2014}, although this might come at the cost of weakening the bound.

\subsection*{Outline of the paper} In section \ref{sectionSobolev}, we give a short overview on the Sobolev embeddings with critical exponents. Section \ref{section sphere} is devoted to a topological construction that yield to the proof of Theorem \ref{Thm1}. And finally, in section \ref{sectionfinale} we refine further this construction to arbitrary Riemannian manifolds and we prove Theorem \ref{Thm1bis}, Theorem \ref{Thm2} and Theorem \ref{Thm3}. 

\section{Sobolev Inequalities}\label{sectionSobolev}
Sobolev embeddings have always played a fundamental role in the study of partial differential equations (PDEs), and particularly in geometric analysis. One of their most significant applications appears in the  proof of the Yamabe problem, which seeks to find a conformal metric with constant scalar curvature in a compact Riemannian manifold. A key step in solving this problem involves sharp Sobolev inequalities, where the best constant in the Sobolev embedding plays a crucial role. 
The results of this section can be found in the thorough presentation of Hebey \cite{Heb} and Druet \cite{Druet} in their French version, or in their lecture note \cite{druet2002ab}. More generally, since this is a well-known topic, we refer to Hebey's books \cite{hebey2013compactness,hebey2000nonlinear,hebey1996sobolev}.

Let $n \geq 3$. by the Gagliardo-Nirenberg theorem, for any $1 \leq p < n$, the Sobolev space $H_1^p(\mathbb{R}^n)$ embeds continuously into $L^{p^*}(\mathbb{R}^n)$, where $p^* = \frac{np}{n-p}$.  

Let $K(n,p)$ be the norm of this embedding, defined by  
\[ K(n,p) = \sup_{f \in H^p(\mathbb{R}^n)} \frac{\|f\|_{p^*}}{\|\nabla f\|_p}. \]  
The explicit expression for $K(n,p)$ had been computed independently by Aubin \cite{T.Au} and Talenti \cite{talenti1976best}, see also \cite{Heb,cordero2004mass}.
For the purpose of this paper, we focus on the particular case where $p = 2$, in which case $p^* = \frac{2n}{n-2}$, denoted as $2^*$. The associated constant is given by  
\[ K(n,2) = \sqrt{\frac{4}{n(n-2)w_n^{\frac{2}{n}}}}, \]  
where $w_n$ is the volume of the unit sphere in $\mathbb{R}^{n+1}$. 

In the setting of a closed Riemannian manifold $(M,g)$ of dimension $n \geq 3$, an alternative formulation of Sobolev embeddings consists of finding two positive geometric constants $A$ and $B$ such that for all $f \in H^1(M)$
\begin{equation}\label{ABconstant}
    \|f\|_{2^*}^2 \leq A\|\nabla f\|_2^2 + B\|f\|_2^2.
\end{equation}  
The central question of determining the optimal constants $A$ and $B$ has been part of the celebrated $AB$-program \cite{druet2002ab}. A result of Hebey and Vaugon \cite{hebey1992meilleures,hebey1995best} asserts that in this case one can choose $A$ to be the optimal constant $K(n,2)^2$, and $B$ to be a geometric constant 
\begin{equation}\label{K(n,2)Bconstant}
    \|f\|_{2^*}^2 \leq K(n,2)^2\|\nabla f\|_2^2 + B\|f\|_2^2.
\end{equation} 
In some special case, the optimal constant $B$ can be explicitly computed. For example, for the standard sphere $(\S^n,g_0)$, Aubin \cite{aubin1976equations} proved that
\begin{equation}\label{Aubinsphereg_0}
    \|f\|_{2^*}^2 \leq K(n,2)^2\|\nabla f\|_2^2 + w_n^{-2/n}\|f\|_2^2.
\end{equation} 
This is optimal and equality is attained by bubble functions. Hebey \cite{Heb} extended this inequality to the conformal class of $[g_0]$. For all $g\in [g_0]$,
\begin{equation}\label{Hebey-Aubinsphere}
\|f\|^2_{2^*} \leq K^2(n,2)\|\nabla f\|^2_2 + \frac{n-2}{4(n-1)}K^2(n,2)\max_{\Sn} S_g \|f\|^2_2,  \end{equation}
where $S_g$ denotes the scalar curvature of $g$.

For $n \geq4$, Hebey's result is optimal, but it remains strict for $n=3$ \cite{hebey1998fonctions}. More generally, such inequalities are directly linked to the Yamabe problem, which seeks to find a metric with constant scalar curvature within a given conformal class. Through the transformation of scalar curvature in a conformal class, Yamabe introduced the functional 
\begin{equation}\label{Ydefivaria}
    Y(M, [g]) = \inf_{f \in H^1(M) \setminus \{0\}} \frac{\int_M \left( 4 \frac{n-1}{n-2} |\nabla f|^2 + S_g f^2 \right) dv_g}{\left( \int_M |f|^{\frac{2n}{n-2}} dv_g \right)^{\frac{n-2}{n}}},
\end{equation}
which is equivalent to the definition given in \eqref{funcionalY}.
Knowing that critical metrics of this functional have constant scalar curvature, Yamabe \cite{yamabe1960deformation} attempted to prove that its minimum is attained using the compactness of Sobolev embeddings. However, his proof contained a mistake, as it did not account for the lack of compactness at the critical Sobolev exponent. Nevertheless, his approach remained valid in specific cases, and it would take several years later to address this issue by Trudinger \cite{trudinger1968remarks}, Aubin \cite{aubin1976equations}, and Schoen \cite{schoen1984conformal}.
When $Y(M,[g])>0$, for all $f \in H^1(M)$, inequality \eqref{Ydefivaria} can be written as 
\begin{equation}\label{SobolevY}
    \|f\|_{2^*}^2 \leq \frac{4(n-1)}{(n-2)Y(M, [g])} \|\nabla u\|_{2}^2 + \frac{1}{ Y(M, [g])} \int_M S_g f^2 \, dv_g.
\end{equation} 
Therefore, using the expression of the Yamabe constant, this yields many examples of Sobolev inequalities with explicit constants $A$ and $B$.

The two next results were proven by Ilias \cite{ILIAS} using symmetrizations via Lévy-Gromov’s inequality \cite{lévy1951problèmes}.

The first one is given when Ricci curvature is bounded below positively, $Ric_g \geq (n-1)a^2$, where $a >0$. For all $f \in H^1(M)$ we have, 
\begin{equation}\label{IliasRicc}
\|f\|^2_{2^*} \leq \frac{4}{n(n-2)a^2 \text{vol}(M,g)^{\frac{2}{n}}} \|\nabla f\|^2_2 + \text{vol}(M,g)^{-\frac{2}{n}} \|f\|^2_2.  
\end{equation}
And the second one is obtained in the most general setting without any assumption on curvature,
\begin{equation}\label{Iliasgene}
\|f\|^2_{2^*} \leq 2K(n,2)^2\frac{C{^*}^2}{C^2(M,g)}\|\nabla f\|^2_2 + 4 \text{vol}(M,g)^\frac{-2}{n}\|f\|^2_2,
\end{equation}
where $C(M,g)$ is the isoperimetric constant and $C^*$ is it analogue for the ball $\B^n$.

\section{the Sphere case}\label{section sphere}
\subsection{Admissible functions as zeros of a vector field}

In this section, we establish a construction for the sphere \(\mathbb{S}^n\) that allows us to use trial functions in the variational characterization of eigenvalues \eqref{caravariation}. 
\smallskip

Let \( n \geq 3 \), and consider (\(\mathbb{S}^n,g)\) equipped with a metric \(g\) conformal to the canonical metric \( g_0 \). Let \( \{f_i\} \) denote an orthonormal basis of eigenfunctions for \( L^2(\mathbb{S}^n, g) \).

For any integer \( k \geq 1 \), consider a point \( p \in \mathbb{S}^{2k} \) on a parametrized sphere with coordinates \( (p_i)_{i=0, \dots, 2k} \). We define a density measure \( d\mu_p \) on \( \mathbb{S}^n \) as
\begin{equation}\label{densitédmu}
d\mu_p := \left( \sum_{i=0}^{2k} p_i f_i \right)^2 dv_g.
\end{equation}
By a standard topological argument based on the center of mass, see Laugesen \cite{laugesen2021well} for a recent version, for every \(p \in \mathbb{S}^{2k}\), there exists a unique point \(\xi_p \in \mathbb{B}^{n+1}\) such that
\[
\int_{\mathbb{S}^n} \phi_{\xi_p} \, d\mu_p = 0,
\]
where for each \(\xi \in \mathbb{B}^{n+1} \subset \mathbb{R}^{n+1}\), the family of conformal automorphisms \(\phi_\xi : \mathbb{S}^n \to \mathbb{S}^n\) is defined as
\begin{equation}\label{defiphi_xi}
    \phi_{\xi}(x) = \xi + \frac{1 - |\xi|^2}{|x + \xi|^2}(x + \xi).
\end{equation}

Let \(\{b_i\}\) be an orthonormal basis of \(\mathbb{R}^{n+1}\), and let \(X_{b_i}\) denote the coordinate functions of \(\mathbb{R}^{n+1}\).

We aim to construct a map whose coordinates satisfy the orthogonality conditions in the variational characterization of \(\lambda_{2k+1}(\mathbb{S}^n, g)\) as in \eqref{caravariation}. To this end,  we define the map \(\mathcal{F}: \mathbb{S}^{2k} \to \mathbb{R}^{2k+1}\) as
\begin{equation}\label{applicationF}  
    \mathcal{F}(p) =
\begin{bmatrix}  
\int_{\mathbb{S}^n} h(p,x) f_0(x) \, dv_g(x) \\  
\vdots \\  
\int_{\mathbb{S}^n} h(p,x) f_{2k}(x) \, dv_g(x)  
\end{bmatrix},
\end{equation}
where for every \((p,x) \in \mathbb{S}^{2k} \times \mathbb{S}^n\) the function \(h: \mathbb{S}^{2k} \times \mathbb{S}^n \to \mathbb{R}\) is defined as
\[
h(p,x) := \left(\sum_{i=1}^{n+1} X_{b_i} \circ \phi_{\xi_p}(x)\right) \left(\sum_{i=0}^{2k} p_i f_i(x)\right).
\]
Our objective is to show that \(\mathcal{F}\) has at least one vanishing point. The following lemma establishes this.

\begin{lemma}\label{lemmedecontinuite}
   There exists a point \(q \in \mathbb{S}^{2k}\) such that $$\mathcal{F}(q)=0.$$
\end{lemma}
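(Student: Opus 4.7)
My plan is to show that $\mathcal{F}$ is in fact a continuous tangent vector field on $\mathbb{S}^{2k}$ and then invoke the Hopf-Poincaré theorem.

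\textbf{Step 1: Continuity of $\mathcal{F}$.} The map $p \mapsto \mathcal{F}(p)$ is continuous provided $p \mapsto \xi_p$ is continuous on $\mathbb{S}^{2k}$, since $\phi_\xi$ depends smoothly on $\xi \in \mathbb{B}^{n+1}$ and integration against the $L^2$ eigenfunctions is continuous in the integrand. Continuity of the center-of-mass map $p \mapsto \xi_p$ is a standard consequence of its uniqueness together with the continuous dependence of the density $d\mu_p = (\sum_i p_i f_i)^2 dv_g$ on $p$; this is exactly the setup recalled from Laugesen's paper cited above.

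\textbf{Step 2: Tangency to $\mathbb{S}^{2k}$.} I would show that $\langle p, \mathcal{F}(p)\rangle_{\mathbb{R}^{2k+1}} = 0$ for every $p \in \mathbb{S}^{2k}$, using the defining property of $\xi_p$. Indeed, by definition of $\mathcal{F}$,
\begin{equation*}
\langle p, \mathcal{F}(p)\rangle = \sum_{j=0}^{2k} p_j \int_{\mathbb{S}^n} h(p,x) f_j(x)\, dv_g(x) = \int_{\mathbb{S}^n} h(p,x) \Bigl(\sum_{j=0}^{2k} p_j f_j(x)\Bigr) dv_g(x).
\end{equation*}
Substituting the definition of $h$ and recognizing the density $d\mu_p$ yields
\begin{equation*}
\langle p, \mathcal{F}(p)\rangle = \sum_{i=1}^{n+1} \int_{\mathbb{S}^n} X_{b_i}\circ \phi_{\xi_p}(x)\, d\mu_p(x),
\end{equation*}
and each of the $n+1$ integrals on the right vanishes by the very definition of $\xi_p$ as the center of mass satisfying $\int_{\mathbb{S}^n} \phi_{\xi_p}\, d\mu_p = 0$ in $\mathbb{R}^{n+1}$. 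Thus $\mathcal{F}(p) \in p^\perp = T_p\mathbb{S}^{2k}$.

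\textbf{Step 3: Existence of a zero.} Combining Steps 1 and 2, $\mathcal{F}$ is a continuous tangent vector field on the even-dimensional sphere $\mathbb{S}^{2k}$. Since $\chi(\mathbb{S}^{2k}) = 2 \neq 0$, the Hopf-Poincaré theorem (equivalently, the Hairy Ball theorem) forces $\mathcal{F}$ to vanish at some point $q \in \mathbb{S}^{2k}$, giving the conclusion.

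\textbf{Main obstacle.} The genuine content is the tangency identity in Step 2: the dimension of the source $\mathbb{S}^{2k}$ is one less than the number of orthogonality conditions, and it is precisely the $n+1$ scalar conditions defining the center of mass that compress into the single linear relation $\langle p, \mathcal{F}(p)\rangle = 0$. Once this algebraic identity is in hand, the topological conclusion is immediate; continuity of $\xi_p$ is a technical but well-documented ingredient. Note that this argument is sharp in the sense that a similar construction for $\lambda_{2k}$ would land in $T_p\mathbb{S}^{2k-1}$, an odd-dimensional sphere where Hopf-Poincaré gives no information---this explains why only consecutive pairs $(\lambda_{2k}, \lambda_{2k+1})$ appear in the main theorems.
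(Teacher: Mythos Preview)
Your proof is correct and follows essentially the same approach as the paper: verify the tangency $\langle p,\mathcal{F}(p)\rangle=0$ via the center-of-mass condition, note continuity through $p\mapsto\xi_p$ (citing Laugesen), and conclude with Hopf--Poincar\'e on the even-dimensional sphere. The only cosmetic difference is the order of the continuity and tangency steps; your additional remark explaining why the argument is tied to the parity $2k$ is a nice complement but not part of the paper's proof.
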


\begin{proof}[Proof of Lemma \ref{lemmedecontinuite}]
We first show that \(\mathcal{F}\) is a continuous vector field on \(\mathbb{S}^{2k}\). To achieve this, we need to check that  
\[
\langle \mathcal{F}(p), p \rangle_{\mathbb{R}^{2k+1}} = 0.
\]
By the linearity of the integral, we have  
\begin{align*}
      \langle \mathcal{F}(p), p \rangle_{\mathbb{R}^{2k+1}} &= 
     \sum_{i=0}^{2k} \left(\int_{\mathbb{S}^n} h(p,x) f_i(x) \, dv_g(x) \right) p_i \\
     &= \int_{\mathbb{S}^n} h(p,x) \sum_{i=0}^{2k} p_i f_i(x) \, dv_g(x) \\
     &= \int_{\mathbb{S}^n} \left(\sum_{i=1}^{n+1} X_{b_i} \circ \phi_{\xi_p}(x)\right) \left(\sum_{i=0}^{2k} p_i f_i(x)\right)^2 dv_g(x) \\
     &= \sum_{i=1}^{n+1} \int_{\mathbb{S}^n} X_{b_i} \circ \phi_{\xi_p}(x) \, d\mu_p(x) \\
     &= 0,
\end{align*}
where the last step follows from the definition of \(\xi_p\), as it is the renormalization point of the measure \(d\mu_p\).  
The continuity of \(\mathcal{F}\) follows naturally from the continuity of the parameter \(\xi_p\), see Laugesen \cite{laugesen2021well}.
Thus, \(\mathcal{F}\) is a continuous vector field.

The proof is concluded by the Hopf-Poincaré theorem \cite{Hopf1927}. Indeed since \(\mathcal{F}\) is a continuous vector field on an even-dimensional sphere, there exists a point \(q \in \mathbb{S}^{2k}\) such that \(\mathcal{F}(q) = 0\).

\end{proof}
From now on, let \(q\) be a zero of \(\mathcal{F}\), as guaranteed by Lemma \ref{lemmedecontinuite}. Consequently, the function \(h(q,x)\) is an admissible function in the variational characterization of \(\lambda_{2k+1}(\mathbb{S}^n, g)\).

Consider the symmetric bilinear form \(\mathcal{G}_q\) on \(\mathbb{R}^{n+1}\), defined as
{\scriptsize
\begin{align}\label{formesbilinéaires}
\mathcal{G}_q(v,w) &= \lambda_{2k+1}(\S^n,g) \int_{\Sn} X_v \circ \phi_{\xi_q} X_w \circ \phi_{\xi_q} d\mu_q \\
& \quad - \int_{\Sn} \nabla \left(X_v \circ \phi_{\xi_q} \sum_{i=0}^{2k} q_i f_i\right)  \nabla \left(X_w \circ \phi_{\xi_q} \sum_{i=0}^{2k} q_i f_i \right) dv_g. \notag
\end{align}
}
By the diagonalization theorem, there exists an orthonormal basis \(\{e_i\}\) such that the bilinear form \(\mathcal{G}_q\) is diagonal in this basis.

Without loss of generality, we may assume that the function 
\[
f(x) := \sum_{i=1}^{n+1} X_{e_i} \circ \phi_{\xi_q}(x)  \sum_{i=0}^{2k} q_i f_i(x) 
\]
is also an admissible function for \(\lambda_{2k+1}(\mathbb{S}^n, g)\). This follows from the fact that the bases \(\{e_i\}\) and \(\{b_i\}\) are related by an orthogonal transformation, which preserves integrals over \(\mathbb{S}^n\), and that \(h\) is already an admissible function.

\subsection{Estimates of the Rayleigh Quotient of \(f\)}
By the variational characterization of \(\lambda_{2k+1}(\S^n, g)\) \eqref{caravariation}, the function $f$ verifies 

\begin{equation}\label{inégalitél2k+1}
\lambda_{2k+1}(\Sn, g) \int_{\Sn} f^2 \, dv_g \leq \int_{\Sn} |\nabla f|^2 \, dv_g.
\end{equation}
We first start by simplifying this expression.
Since
\begin{align*}
    \int_{\Sn} \left(\sum_{i=1}^{n+1}X^2_{e_i}\circ \phi_{\xi_q}\right)^2 \left(\sum_{i=0}^{2k} q_i f_i\right)^2 dv_g &=\int_{\Sn} \left(\sum_{i=0}^{2k} q_i f_i\right)^2 dv_g \\ 
    &= \sum_{i=0}^{2k} q^2_i\int_{\Sn} f_i^2 dv_g \\ 
    &= \sum_{i=0}^{2k} q^2_i \\ 
    &= 1,
\end{align*}
by expanding both sides in \eqref{inégalitél2k+1}, we obtain

\[
\lambda_{2k+1}(\Sn,g) + \sum_{i\neq j}\mathcal{G}_q(e_i,e_j) \leq \sum_{i=1}^{n+1} \int_{\Sn} \left|\nabla \left( X_{e_i}\circ \phi_{\xi_q} \sum_{i=0}^{2k} q_i f_i \right)\right|^2 dv_g.
\]
However, by construction when \(i \neq j\) we have
\[
\mathcal{G}_q(e_i,e_j) = 0.
\]
Therefore, inequality \eqref{inégalitél2k+1} becomes
\begin{align}
    \lambda_{2k+1}(\S^n,g) &\leq  \sum_{i=1}^{n+1}\int_{\Sn} \left|\nabla \left( X_{e_i}\circ \phi_{\xi_q} \left(\sum_{i=0}^{2k} q_i f_i\right) \right) \right|^2 dv_g.
\end{align}
Our next goal is to estimate this right term. We set $$v_i = X_{e_i}\circ \phi_{\xi_q} \quad \text{and} \quad u = \sum_{i=0}^{2k} q_i f_i,$$
so that
\begin{align}\label{énergieàestimer}
    \lambda_{2k+1}(\S^n,g) &\leq  \sum_{i=1}^{n+1}\int_{\Sn} \left|\nabla \left( v_i u \right) \right|^2 dv_g.
\end{align}
Next proposition shows that we can split this right term for an easier control.

\begin{prop}\label{proposition_decomposition_pd}
      For any two smooth functions $u$ and  $v$ in $\mathcal{C}^\infty(\S^n)$, we have  $$\int_{\Sn}|\nabla (v u) |^2dv_g= \int_{\Sn}v_i^2 u \Delta_g u \, dv_g + \int_{\Sn} u^2 |\nabla v |^2dv_g.$$
\end{prop}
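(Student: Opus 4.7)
The identity is a standard integration-by-parts manipulation, so the plan is short and mostly computational. The only real thing to watch is the sign convention for $\Delta_g$ used in the paper (the one making $\int |\nabla f|^2 = \int f \Delta_g f$ on a closed manifold).

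First, I would apply the Leibniz rule $\nabla(vu) = u\nabla v + v\nabla u$ and expand the square:
\begin{equation*}
|\nabla(vu)|^2 = u^2 |\nabla v|^2 + v^2 |\nabla u|^2 + 2uv\,\langle \nabla u, \nabla v\rangle.
\end{equation*}
The term $u^2|\nabla v|^2$ is already what appears on the right-hand side, so the task reduces to identifying the remaining two terms with $v^2 u\,\Delta_g u$ after integration.

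Next, I would recognize that the Leibniz rule applied to the product $v^2 u$ gives $\nabla(v^2 u) = v^2 \nabla u + 2uv\,\nabla v$, so that
\begin{equation*}
\langle \nabla(v^2 u), \nabla u\rangle = v^2 |\nabla u|^2 + 2uv\,\langle \nabla v, \nabla u\rangle.
\end{equation*}
Substituting this into the expanded square yields the pointwise identity
\begin{equation*}
|\nabla(vu)|^2 = u^2|\nabla v|^2 + \langle \nabla(v^2 u), \nabla u\rangle.
\end{equation*}

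Finally, I would integrate this identity over $\S^n$ and apply Green's formula on the closed manifold (no boundary terms) to the second summand:
\begin{equation*}
\int_{\S^n} \langle \nabla(v^2 u), \nabla u\rangle\, dv_g = \int_{\S^n} v^2 u\,\Delta_g u\, dv_g,
\end{equation*}
which combined with the previous display gives the claim. There is no real obstacle here: the argument is pure product rule plus one integration by parts, and its only purpose in the paper is to allow the estimate of $\int |\nabla(v_i u)|^2$ in \eqref{énergieàestimer} to be split into a piece controlled by the eigenvalue equation for $u$ and a piece controlled purely by the geometry of the conformal automorphism $\phi_{\xi_q}$ (through $|\nabla v_i|^2$).
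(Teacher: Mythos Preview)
Your argument is correct and slightly more streamlined than the paper's. The paper starts from $\int_{\Sn}|\nabla(vu)|^2\,dv_g=\int_{\Sn} vu\,\Delta_g(vu)\,dv_g$, expands $\Delta_g(vu)$ via the product rule, then rewrites $2vu\,\nabla u\cdot\nabla v=\tfrac12\nabla u^2\cdot\nabla v^2$, integrates by parts once more, and uses $\Delta_g v^2=2v\Delta_g v-2|\nabla v|^2$ to cancel the $u^2 v\,\Delta_g v$ terms. Your route instead identifies the \emph{pointwise} identity $|\nabla(vu)|^2=u^2|\nabla v|^2+\langle\nabla(v^2 u),\nabla u\rangle$ directly from the Leibniz rule and then performs a single integration by parts. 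Both are elementary; yours avoids ever introducing $\Delta_g v$ and so reaches the conclusion with one fewer step.
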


\begin{proof}
The proof is just a successive integrations by parts: 
\begin{align*}
    \int_{\Sn}|\nabla (v u) |^2dv_g &= \int_{\Sn} v u \Delta_g(v u)dv_g \\
        &=\int_{\Sn} (v_i^2 u\Delta_g u + u^2 v \Delta_g v_i -2 v_i u \nabla u \cdot \nabla v)dv_g\\
        &=\int_{\Sn} (v^2 u\Delta_g u + u^2 v \Delta_g v_i - \frac{1}{2} \nabla u^2 \cdot \nabla v^2)dv_g\\
        &=\int_{\Sn} (v^2 u\Delta_g u + u^2 v \Delta_g v_i - \frac{1}{2} u^2\Delta_g v^2)dv_g\\
        &=\int_{\Sn} (v^2 u\Delta_g u + u^2 v \Delta_g v - \frac{1}{2} u^2( 2 v\Delta_g v -2 |\nabla v|^2))dv_g\\
        &=\int_{\Sn} (v^2 u\Delta_g u + u^2|\nabla v|^2)dv_g.
\end{align*}
\end{proof}
Applying Proposition \ref{proposition_decomposition_pd} to the two functions $u$ and $v_i$ involved in inequality \eqref{énergieàestimer}, and observing that $\sum_{i=1}^{n+1} v_i^2 = 1$, we get
\begin{equation}\label{terms_to_estimate}
       \lambda_{2k+1}(\Sn,g)\leq  \int_{\Sn} u \Delta_g u \, dv_g +\sum_{i=1}^{n+1}\int_{\Sn} u^2 |\nabla v_i |^2 dv_g.
\end{equation}
We now estimates each of the terms appearing in the right hand of this last inequality. To estimate $\int_{\Sn} u \Delta_g u \, dv_g$, we proceed as follows
\begin{align*}
\int_{\Sn} u \Delta_g u \, dv_g &= \int_{\Sn} \left(\sum_{i=0}^{2k} q_i f_i \right) \Delta_g\left(\sum_{i=0}^{2k} q_i f_i\right) dv_g \\ 
&= \int_{\Sn} \left(\sum_{i=0}^{2k} q_i f_i \right) \left(\sum_{i=0}^{2k} q_i \lambda_{i}(\Sn,g) f_i\right) dv_g \\ 
&=  \sum_{i=0}^{2k} q^2_i \lambda_{i}(\Sn,g) \int_{\Sn}  f^2_i \, dv_g \\
&\leq \lambda_{2k}(\Sn,g) \sum_{i=0}^{2k} q^2_i\int_{\Sn}  f^2_i \, dv_g \\
&= \lambda_{2k}(\Sn,g).
\end{align*} 
Substituting into \eqref{terms_to_estimate}, we deduce
  
\begin{equation}\label{l_k-l_k-1}
      \lambda_{2k+1}(\Sn,g)- \lambda_{2k}(\Sn,g) \leq \sum_{i=1}^{n+1}\int_{\Sn} u^2 |\nabla v_i |^2 dv_g.
\end{equation}
It remains to control
$ \sum_{i=1}^{n+1}\int_{\Sn} u^2 |\nabla v_i|^2 \, dv_g.$ To achieve this we apply Hölder’s inequality,

\begin{align*}
\sum_{i=1}^{n+1}\int_{\Sn} u^2 |\nabla v_i|^2 \, dv_g &\leq \left( \int_{\Sn} \left( \sum_{i=1}^{n+1} |\nabla_g v_i|^2 \right)^\frac{n}{2} dv_g \right)^\frac{2}{n} 
 \left( \int_{\Sn} |u|^\frac{2n}{n-2} \, dv_g \right)^\frac{n-2}{n} \\
\end{align*}

Since $g \in [g_0]$, by conformal invariance we have
\begin{align*}
    \left( \int_{\Sn} \left( \sum_{i=1}^{n+1} |\nabla_g v_i|^2 \right)^\frac{n}{2} dv_g \right)^\frac{2}{n} &=\left( \int_{\Sn} \left( \sum_{i=1}^{n+1} |\nabla_{g} X_{e_i} \circ \phi_\xi|^2 \right)^\frac{n}{2} dv_{g} \right)^\frac{2}{n} \\ &= n w_n^\frac{2}{n}.
\end{align*}

Hence, we obtain
\[
\sum_{i=1}^{n+1} \int_{\Sn} u^2 |\nabla v_i|^2 \, dv_g \leq n w_n^\frac{2}{n} \|u\|^2_{\frac{2n}{n-2}}.
\]

Finally, returning to inequality \eqref{l_k-l_k-1}, we deduce

\begin{equation}\label{inégsanscontinuité}
    \lambda_{2k+1}(\Sn,g) - \lambda_{2k}(\Sn,g) \leq n w_n^\frac{2}{n} \|u\|^2_{\frac{2n}{n-2}}.
\end{equation}

Once we reach this step, the proof of Theorem \ref{Thm1} follows from the estimate we know for \(\|u\|^2_{\frac{2n}{n-2}}\), which is a Sobolev norm with critical exponent.

\subsection{Proof of Theorem \ref{Thm1}}

\begin{proof}[Proof of Theorem \ref{Thm1}]
From inequality \eqref{inégsanscontinuité} we've just obtained, we have
\[
\lambda_{2k+1}(\Sn, g) - \lambda_{2k}(\Sn, g) \leq n w_n^\frac{2}{n} \| u \|^2_{\frac{2n}{n-2}}.
\]
We apply Hebey's estimate \eqref{Hebey-Aubinsphere} on the critical Sobolev norm to obtain
\[
\lambda_{2k+1}(\Sn, g) - \lambda_{2k}(\Sn, g) \leq n w_n^\frac{2}{n} \left( K(n, 2)^2 \| \nabla u \|_2^2 + \frac{\max_{\Sn} S_g}{n(n-1) w_n^\frac{2}{n}} \| u \|_2^2 \right).
\]
The proof is completed by observing that
\[
\| \nabla u \|_2^2 \leq \lambda_{2k}(\S^n,g), \quad \text{and} \quad \| u \|_2^2 = 1,
\]
which allows us to deduce that
\[
\lambda_{2k+1}(\Sn, g) - \lambda_{2k}(\Sn, g) \leq n w_n^\frac{2}{n} \left( K(n, 2)^2 \lambda_{2k}(\Sn, g) + \frac{\max_{\Sn} S_g}{n(n-1) w_n^\frac{2}{n}} \right).
\]
Theorem \ref{Thm1} follows after simplifications.

\end{proof}

\section{The case of an arbitrary closed Riemannian manifold}\label{sectionfinale}

Let \((M, g)\) be a closed manifold of dimension \(n \geq 3\), and let \(\phi: (M,g) \to \mathbb{S}^m\) be a conformal immersion. Our objective in this section is to ensure that the construction developed for the sphere extends naturally to \(M\) via \(\phi\). This approach, which played a central role in our previous work \cite{EddaoudiGirouard}, introduces a crucial additional geometric quantity: the $m$-conformal volume $V_c(m,M,C)$.

Set \( \{f_i\} \) to be an orthonormal basis of eigenfunctions for $L^2(M,g)$. More precisely, we aim  to establish that the corresponding inequality to \eqref{inégsanscontinuité} on $(M,g)$ takes the form
\begin{equation}\label{inegalivoconfme}
    \lambda_{2k+1}(M,g) - \lambda_{2k}(M,g) \leq n V_c(m,M,C)^{\frac{2}{n}} \| u\|^2_{\frac{2n}{n-2}},
\end{equation}
where \(u = \sum_{i=0}^{2k} q_i f_i\) and \(q \in \mathbb{S}^{2k}\).

To this end, let \( p \in \mathbb{S}^{2k} \) and define $\m_p$ as the analogous density on \( (M, g) \)  to \eqref{densitédmu},
\[
\m_p := \left( \sum_{i=0}^{2k} p_i f_i \right)^2 dv_g.
\]
By considering the pushforward measure \( \phi_* \m_p \) on the sphere \( \mathbb{S}^m \), we can apply the same standard topological argument to deduce the existence of a point \( \xi_p \in \mathbb{B}^{m+1} \) that satifies
\[
\int_{M} \phi_{\xi_p} \circ \phi \, \m_p = 0,
\]
where \( \phi_{\xi_p} \) is the conformal automorphism of $\S^m$ defined in \eqref{defiphi_xi}.

Next we set \(\{b_i\}\) to be an orthonormal basis of \(\mathbb{R}^{m+1}\), and we consider a map  \(\mathcal{\FF}: \mathbb{S}^{2k} \to \mathbb{R}^{2k+1}\) as
\begin{equation}
    \mathcal{\FF}(p) =
\begin{bmatrix}  
\int_{M} \h(p,x) f_0(x) \, dv_g(x) \\  
\vdots \\  
\int_{M} \h(p,x) f_{2k}(x) \, dv_g(x)  
\end{bmatrix},
\end{equation}
where  the function \(\h: \mathbb{S}^{2k} \times M \to \mathbb{R}\) is defined as
\[
\h(p,x) := \left(\sum_{i=1}^{m+1} X_{b_i} \circ \phi_{\xi_p}(x) \circ \phi \right) \left(\sum_{i=0}^{2k} p_i f_i(x)\right).
\] 
By adapting the idea behind Lemma \ref{applicationF} to this setting, we can prove similarly that there exists a point $q \in \S^{2k}$ such that 

$$
\mathcal{\FF}(q)=0.
$$
Therefore, the function $\h(q,x)$ is admissible in the variational charaterization of $\l_{2k+1}(M,g)$ \eqref{caravariation},
\[
\lambda_{2k+1}(M,g) \int_{M} \h(q,x)^2 \, dv_g \leq \int_{M} |\nabla \h(q,x)|^2 \, dv_g.
\]
Next we choose an orthonormal basis $\{e_i\}$ that diagonalizes the quadratic form 

{\scriptsize
\begin{align*}
\mathcal{\tilde{G}}_q(v,w) &= \lambda_{2k+1}(M,g) \int_{M} X_v \circ \phi_{\xi_q} \circ \phi  X_w \circ \phi_{\xi_q} \circ \phi \, \m_q \\
&\quad - \int_{M} \nabla \Big(X_v \circ \phi_{\xi_q} \circ \phi \sum_{i=0}^{2k} q_i f_i \Big) 
\cdot \nabla \Big(X_w \circ \phi_{\xi_q} \circ \phi  \sum_{i=0}^{2k} q_i f_i \Big) \, dv_g,
\end{align*}
}
and we follow the same line of argument as in the previous section to obtain a new function
$$\tilde{f}=  \sum_{i=1}^{m+1} X_{e_i} \circ \phi_{\xi_p} \circ \phi  \sum_{i=0}^{2k} q_i f_i$$ such that it is also an admissible function for $\l_{2k+1}(M,g)$,
\[
\lambda_{2k+1}(M,g) \int_{M} \tilde{f}^2 \, dv_g \leq \int_{M} |\nabla \tilde{f}|^2 \, dv_g.
\]
The outline of the previous section dealing with the Rayleigh quotient of $\tilde{g}$ also remain valid until we reach the following inequality, which corresponds to \eqref{inégsanscontinuité} in the sphere case, 
\begin{equation}\label{inégvoluconforme}
\lambda_{2k+1}(M,g) - \lambda_{2k}(M,g) \leq \Bigg(\int_{M} \Big( \sum_{i=1}^{m+1} |\nabla_{g} X_{e_i} \circ \phi_{\xi_q} \circ \phi|^2 \Big)^{\frac{n}{2}} dv_{g} \Bigg)^{\frac{2}{n}} \|u\|^2_{\frac{2n}{n-2}},
\end{equation}  
where \( u = \sum_{i=0}^{2k} q_i f_i \).  

Here we proceed by observing that in the setting of a closed Riemannian manifold, the expression of the integral
\[
\left( \int_{M} \left( \sum_{i=1}^{m+1} \left| \nabla_{g} \left( X_{e_i} \circ \phi_{\xi_q} \circ \phi \right) \right|^2 \right)^{\frac{n}{2}} dv_{g} \right)^{\frac{2}{n}}
\]
can be rewritten in terms of the pullback metric \( (\phi_{\xi_q} \circ \phi)^* g_0 \), namely
\begin{align*}
\Bigg(\int_{M} \Big( \sum_{i=1}^{m+1} |\nabla_{g} X_{e_i} \circ \phi_{\xi_q} \circ \phi|^2 \Big)^{\frac{n}{2}} dv_{g} \Bigg)^{\frac{2}{n}}  
&= n \Big(\text{vol}((\phi_{\xi_q} \circ \phi)^* g_0)\Big)^{\frac{2}{n}} \\
&\leq n \Big(V_c(m,\phi)\Big)^{\frac{2}{n}}.
\end{align*}  
We then take the infimum over all conformal immersions \(\phi : M \to \mathbb{S}^m\) in \eqref{inégvoluconforme} to deduce inequality \eqref{inegalivoconfme} 
\[
\lambda_{2k+1}(M,g) - \lambda_{2k}(M,g) \leq n V_c(m,M,C)^{\frac{2}{n}} \| u\|^2_{\frac{2n}{n-2}}.
\]  
Finally, we are now able to complete the proof of Theorem \ref{Thm2}, Theorem \ref{Thm3}, and Theorem \ref{Thm1bis} using the appropriate estimates on the critical Sobolev norm $\| u\|_{\frac{2n}{n-2}}$. 
\begin{proof}[Proof of Theorem \ref{Thm2}]
Let \((M,g)\) be a closed Riemannian manifold of dimension \(n \geq 3\), and assume that \(Ric_g \geq (n-1)a^2\) for some \(a > 0\).  

Applying Ilias' estimates from \eqref{IliasRicc} to inequality \eqref{inegalivoconfme}, we obtain
\[
\lambda_{2k+1}(M,g) - \lambda_{2k}(M,g) \leq  \frac{4 n V_c(m,M,C)^{\frac{2}{n}}}{n(n-2)a^2 \text{vol}(M,g)^{\frac{2}{n}}} \|\nabla u\|^2_2 
+  \frac{n V_c(m,M,C)^{\frac{2}{n}} }{\text{vol}(M,g)^{\frac{2}{n}}} \|u\|^2_2.
\]
Since
\[
\int_M |\nabla u|^2 \, dv_g  
= \sum_{i=0}^{2k} q_i^2 \int_M |\nabla f_i|^2 \, dv_g  
\leq \sum_{i=0}^{2k} q_i^2 \lambda_i(M,g)  
\leq \lambda_{2k}(M,g),
\]
and
\[
\|u\|^2_{2} = \int_M \bigg| \sum_{i=0}^{2k} q_i f_i \bigg|^2 dv_g  
= \sum_{i=0}^{2k} q_i^2 \int_M f_i^2 dv_g  
= 1.
\]
After regrouping the eigenvalue terms, we deduce
\[
\li- \la \left( 1 + \frac{4 V_c(m,M,C)^\frac{2}{n}}{(n-2) a^2 \text{vol}(M,g)^\frac{2}{n}} \right) \leq n V_c(m,M,C)^\frac{2}{n}.
\]
which completes the proof.
\end{proof}

\begin{proof}[Proofs of Theorem \ref{Thm3} and Theorem \ref{Thm1bis}]
Let \((M,g)\) be a closed Riemannian manifold of dimension \(n \geq 3\).

We apply Ilias' estimates from  \eqref{Iliasgene} to inequality \eqref{inégvoluconforme}, and the result follows in the same manner
\[
\lambda_{2k+1}(M,g) - \left( 1 + \frac{8 C^{*2} V_c(m,M,C)^{\frac{2}{n}}}{(n-2) C^2(M,g) w_n^{\frac{2}{n}}} \right) \lambda_{2k}(M,g) 
\leq 4 n V_c(m,M,C)^{\frac{2}{n}}.
\]
When the Yamabe constant $Y(M,[g])$ is sctriclty positive, we simply apply again the corresponding Sobolev inequality \eqref{SobolevY}, and Theorem \ref{Thm1bis} follows immediately.
\end{proof}

\section*{Acknowledgments}
The author would like to thank his PhD advisor, Alexandre Girouard, for his valuable suggestions, which have greatly improved the clarity of this work. The author also thanks Bruno Colbois for his insightful discussions.
\bibliographystyle{plain}
\bibliography{biblio}
\bigskip
\end{document}